\newcommand{\doublespacinggg}{\let\CS=\@currsize\renewcommand{\baselinestretch}{1.55}\tiny\CS}
\newcommand{\doublespacingg}{\let\CS=\@currsize\renewcommand{\baselinestretch}{1.75}\tiny\CS}
\newtheorem{thm}{Theorem}[section]
 \newtheorem{cor}[thm]{Corollary}
   \newtheorem{examp}[thm]{Example}
  \newtheorem{defin}[thm]{Definition}
  \newtheorem{prop}[thm]{Proposition}
\newcommand{\be}{\begin{equation}}
\newcommand{\ee}{\end{equation}}
\newcommand{\bea}{\begin{eqnarray}}
\newcommand{\eea}{\end{eqnarray}}
\newcommand{\bee}{\begin{eqnarray*}}
\newcommand{\eee}{\end{eqnarray*}}
\title{On a generalization of NC-McCoy Rings}
\author{ \small Mohammad Vahdani Mehrabadi $^{*}$, 
Shervin Sahebi $^{**}$ and Hamid H. S. Javadi $^{***}$\\ $^{*,**}$ 
Department of Mathematics, Islamic Azad University,\\
Central Tehran Branch, 13185/768, Iran, \\email: 
md \underline{~}vahdani@yahoo.com; sahebi@iauctb.ac.ir 
\\ 
$^{***}$ Department of Mathematics and Computer Science, Shahed University,\\ Tehran, Iran, email: h.s.javadi@shahed.ac.ir.\\}
\begin{document}

\date{}
\maketitle \noindent \vspace{-.8cm}

\doublespacingg

\begin{center}
\begin{minipage}{11cm} \footnotesize { \textsc{Abstract:}
In the present paper we concentrate on a natural generalization of NC-McCoy rings 
that is called J-McCoy and investigate their properties. 
We prove that local rings are J-McCoy. 
Also, for an abelian ring $R$, we show that 
$R$ is J-McCoy if and only if $eR$ is J-McCoy, 
where $e$ is an idempotent element of $R$. 
Moreover, we give an example to show that the J-McCoy property 
does not pass $M_{n}(R)$, but $S(R, n), A(R, n), B(R, n)$ and $T(R, n)$ are J-McCoy.
  }

 \end{minipage}
\end{center}

 \vspace*{.4cm}

 \noindent {\footnotesize {\bf Mathematics Subject Classification 2010: 16U20, 16S36,16W20}  \\
 {\bf Keywords:} McCoy ring, Weak McCoy ring, J-McCoy ring, Polynomial ring, Upper triangular matrix ring.  }

\vspace*{.2cm}

\doublespacing

\section{Introduction}
Throughout this paper, $R$ denotes an associative ring with identity. 
For notation $Nil(R)$, $M_{n}(R)$, $T_{n}(R)$, $I_{n}(R)$, $E_{ij}$, $R[x]$ 
and $N(R)$ denote the set of all nilpotent elements in $R$, 
the $n \times n$ matrix ring over $R$, 
upper triangular matrix ring over $R$, 
identity matrix over $R$, unit matrices, polynomial ring over $R$ 
and the set of all nilpotent elements in $R$, respectively. 
Rege -Chhawchharia \cite{a15} called a noncommutative ring $R$ 
right McCoy if whenever polynomials 
$f(x)=\sum_{i=0}^n a_{i}x^{i}$, 
$g(x)=\sum_{j=0}^m b_{j}x^{j}\in R[x]\setminus\{0\}$ 
satisfy $f(x)g(x)=0$, there exists nonzero elements $r\in R$ 
such that $a_{i}r=0$. 
Left McCoy rings are defined similarly. 
A number of papers have been written on McCoy property of rings 
(see, e. g., \cite{a1, a3, a9, a13, a14, a16}).
The name "McCoy" was chosen because 
McCoy \cite{a13} had noted that every commutative ring satisfies this condition. 
Victor Camilo, Tai Keun Kwak, and Yang Lee \cite{a4} called 
a ring $R$ right nilpotent coefficient McCoy(simply, right NC-McCoy) 
if whenever polynomials $f(x)=\sum_{i=0}^n a_{i}x^{i}$, 
$g(x)=\sum_{j=0}^m b_{j}x^{j}\in R[x]\setminus\{0\}$  
satisfy $f(x)g(x)=0$, there exists nonzero elements $r\in R$ 
such that $f(x)r \in N(R)[x]$. 
Left NC-McCoy rings are difined analogously, 
and a ring $R$ is called NC-McCoy if it is both left and right NC-McCoy. 
They proved for a reduced ring $R$ and $n \geq 2$, 
$M_n(R)$ is neither right nor left NC-McCoy, 
but $T_n(R)$ is a NC-McCOy ring for $n \geq 2$. 
Moreover, it is shown that  $R$ is right NC-McCoy 
if the polynomial ring $R[x]$ is right NC-McCoy 
and the converse holds if $N(R)[x] \subset N(R[x])$.

Motivated by the above results, 
we investigate a generalization of the right NC-McCoy rings 
The Jacobson radical is an important tool 
for studying the structure of noncommutative rings, 
and denoted by $J(R)$. 
A ring $R$ is said to be right J-McCoy (respectively left  J-McCoy) 
if for each pair of nonzero polynomials $f(x)=\sum_{i=0}^n a_{i}x^{i}$ and 
$g(x)=\sum_{j=0}^m b_{j}x^{j}\in R[x]\setminus\{0\}$ 
with $f(x)g(x)=0$, then there exists a nonzero element 
$r \in R$ such that $a_{i}r \in J(R)$ (respectively $rb_{j} \in J(R)$). 
A ring $R$ is called J-McCoy if it is both  left  and right J-McCoy. 
It is clear that NC-McCoy rings are J-McCoy, 
but the converse is not always true. 
If R is J-semisimple (Namely, $J(R)=0$), 
then $R$ is right J-McCoy if and only if $R$ is right McCoy. 
Moreover, for Artinian rings, the concepts of NC-McCoy and J-McCoy rings are the same.

\section{J- McCoy Rings}

 \noindent
We start this section by the following definition:
\begin{defin}\label{def 1.1}  
A ring $R$ is said to be right J-McCoy (respectively left  J-McCoy) 
if for each pair of nonzero polynomials $f(x)=\sum_{i=0}^n a_{i}x^{i}$ 
and $g(x)=\sum_{j=0}^m b_{j}x^{j}\in R[x]\setminus\{0\}$,  
$f(x)g(x)=0$ implies that there exists a nonzero element 
$r \in R$ with $a_{i}r \in J(R)$ (respectively $rb_{j} \in J(R)$). 
A ring $R$ is called J-McCoy if it is both  left  and right J-McCoy.
\end{defin}

It is clear that NC-McCoy rings are J-McCoy, 
but the converse is not always true by the following example.

\begin{examp}\label{examp1}
 Let $A$ be the 3 by 3 full matrix ring 
 over the power series ring $F[[t]]$ over a field $F$. 
 Let 
\begin{center}
$B= \{  M=(m_{ij}) \in A~|~m_{ij} \in tF[[t]]$~for~1 $\leq i,j \leq 2$ ~and 
$m_{ij}=0$~for~$i=3$ ~or~ $j=3 \}$ 
\end{center} 
and 
\begin{center}
$C= \{ M=(m_{ij}) \in A~|~m_{ij} \in F$~and~ $m_{ij}=0$~for~$i \neq j \}.$
\end{center}
Let $R$ be the subring of $A$ generated by $B$ and $C$. 
Let $F=\mathbb{Z}_{2}$. 
Note that every element of R is of the form $(a+f_{1})e_{11}+f_{2}e_{12}+f_{3}e_{21}+(a+f_{4})e_{22}+ae_{33}$ for some $a \in F$ and $f_{i} \in tF[[t]]~ (i=1, 2, 3, 4)$. 
Consider two polynomials over R,
$f(x)=te_{11}+te_{12}x+te_{21}x^{2}+te_{22}x^{3}$ and 
$g(x)= -t(e_{21}+e_{22})+t(e_{11}+e_{12})x \in R[x]$. 
Then $f(x)g(x)=0$ but there cannot exist $ 0 \neq r \in R$ 
such that $f(x)r \in N(R[x])$, concluding that $R$ is not right NC-McCoy.\\
Next we will show that $R$ is right J-McCoy. 
Let $f(x)=\sum_{i=0}^n M_{i}x^{i}$ and 
$g(x)=\sum_{j=0}^m N_{j}x^{j}$ be nonzero polynomials in $R[x]$ 
such that $f(x)g(x)=0$. 
Since $M_{i}=(a_{i}+f_{i1})e_{11}+f_{i2}e_{12}+f_{i3}e_{21}+(a_{i}+f_{i4})e_{22}+
a_{i}e_{33}$ for some $a_i \in F$ and  
$f_{ij} \in tF[[t]]~ (j=0,1,2,3,4)$,  
then for $C=te_{11}$ we have 
$M_{i}C=(a^{i}+f^{i}_{1})te_{11}+f^{i}_{3}te_{21} \in J(R)$.
Thus $R$ is right J-McCoy ring.
\end{examp}

\begin{prop}\label{proposition1}
Let $R$ be a ring and $I$ an ideal of $R$ such that 
$R/I$ is a right (resp. left) J-McCoy ring. 
If $I \subseteq J$, then $R$ is a right (resp. left) J-McCoy ring. 
\end{prop}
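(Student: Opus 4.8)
The plan is to pass to the quotient $R/I$, exploit its right J-McCoy property there, and pull the resulting witness back to $R$. The one structural fact I need about the Jacobson radical is the standard identity $J(R/I)=J(R)/I$, which holds precisely because $I\subseteq J(R)$: every maximal left ideal of $R$ already contains $J(R)$ and hence $I$, so the maximal left ideals of $R/I$ correspond bijectively to those of $R$, and intersecting them gives $J(R/I)=J(R)/I$. I will treat the right version; the left version is entirely symmetric.

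Let $\pi\colon R\to R/I$ be the canonical surjection, written $\bar a=\pi(a)$, and extend it coefficientwise to a ring homomorphism $R[x]\to (R/I)[x]$. Take nonzero $f(x)=\sum_{i=0}^{n}a_ix^i$ and $g(x)=\sum_{j=0}^{m}b_jx^j$ in $R[x]$ with $f(x)g(x)=0$; applying $\pi$ yields $\bar f(x)\bar g(x)=0$ in $(R/I)[x]$. The delicate point is that reduction modulo $I$ may collapse one of the two polynomials to $0$, in which case the J-McCoy hypothesis for $R/I$ cannot be invoked directly. I therefore split the argument into three cases according to whether $\bar f$ and $\bar g$ vanish.

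If $\bar f=0$, then every $a_i\in I\subseteq J(R)$, and $r=1$ works, since $a_ir=a_i\in J(R)$ for all $i$. If $\bar f\neq 0$ but $\bar g=0$, then every $b_j\in I\subseteq J(R)$; as $g\neq 0$, some coefficient $b_{j_0}\neq 0$, and taking $r=b_{j_0}$ gives $a_ir=a_ib_{j_0}\in J(R)$ for every $i$, because $J(R)$ is a two-sided ideal containing $b_{j_0}$. In the remaining case $\bar f\neq 0$ and $\bar g\neq 0$, I apply the right J-McCoy property of $R/I$ to produce a nonzero $\bar r\in R/I$ with $\bar a_i\bar r\in J(R/I)=J(R)/I$ for all $i$. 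Lifting $\bar r$ to any representative $r\in R$, the condition $\bar r\neq 0$ forces $r\notin I$ and hence $r\neq 0$, while $\bar a_i\bar r\in J(R)/I$ translates to $a_ir\in J(R)$. In every case a nonzero $r$ with $a_ir\in J(R)$ is obtained, so $R$ is right J-McCoy.

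The main obstacle, as indicated, is the degenerate behaviour of the reduction map: the clean application of the quotient's J-McCoy property only covers the case in which neither image vanishes, and the two boundary cases must be settled by hand, using that any coefficient landing in $I$ already lies in the ideal $J(R)$. Once the identity $J(R/I)=J(R)/I$ is established, the lifting of $r$ and the translation of the membership condition back to $R$ are routine.
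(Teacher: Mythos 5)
Your proof is correct and follows the same route as the paper's own argument: reduce modulo $I$, invoke the right J-McCoy property of $R/I$, and pull the witness back along $J(R/I)=J(R)/I$. In fact your write-up is more complete than the paper's: the published proof passes directly from $\bar f(x)\bar g(x)=\bar 0$ to the existence of $\bar c$ without checking that $\bar f$ and $\bar g$ are nonzero in $(R/I)[x]$ (the J-McCoy hypothesis applies only to nonzero polynomials), and it never remarks that the lifted witness $c$ is nonzero; your two boundary cases ($\bar f=0$ handled by $r=1$, and $\bar g=0$ handled by $r=b_{j_0}$ using that $J(R)$ is an ideal) together with the observation that $\bar r\neq 0$ forces $r\notin I$, hence $r\neq 0$, are exactly the patches needed to make the argument airtight.
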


\begin{proof}
Suppose that $f(x)= \sum_{i=0}^m a_{i}x^{i}$ and 
$g(x)= \sum_{j=0}^n b_{j}x^{j} \in R[x]\setminus\{0\}$ such that $f(x)g(x)=0$. 
Then $(\sum_{i=0}^m \bar{a_{i}}x^{i})(\sum_{j=0}^n \bar{b_{j}}x^{j})=\bar{0}$ in $R/I$ . Thus there exists $\bar{c} \in R/I$ such that $\bar{a_{i}}\bar{c} \in J(R/I)$ 
and so $a_{i}c \in J(R)$. 
This means $R$ is right J-McCoy ring.
\end{proof}

\begin{cor}\label{Corollary}
Let $ R $ be any local ring. Then $ R $ is J-McCoy.
\end{cor}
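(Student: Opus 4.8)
The plan is to reduce the statement to Proposition \ref{proposition1} by passing to the quotient of $R$ by its Jacobson radical. Recall that a local ring $R$ has a unique maximal left (equivalently, right) ideal, and this ideal coincides with $J(R)$; consequently the quotient $R/J(R)$ is a division ring. I would therefore take $I = J(R)$, which trivially satisfies $I \subseteq J(R)$, and aim to verify the hypothesis of Proposition \ref{proposition1}, namely that $R/I = R/J(R)$ is both left and right J-McCoy. Once this is established, Proposition \ref{proposition1} immediately yields that $R$ is both left and right J-McCoy, that is, J-McCoy.

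The crux of the argument is thus to show that the division ring $D = R/J(R)$ is J-McCoy. Here I would use the fact that $D$ has no zero divisors, so the polynomial ring $D[x]$ is itself a domain. Hence there do not exist nonzero polynomials $f(x), g(x) \in D[x]$ with $f(x)g(x) = 0$, and the defining condition of the (left and right) J-McCoy property is satisfied vacuously. Alternatively, one may invoke the observation recorded in the introduction: since $J(D) = 0$, the ring $D$ is J-semisimple, so $D$ is right (resp.\ left) J-McCoy if and only if it is right (resp.\ left) McCoy, and a division ring is trivially McCoy since it is a domain.

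I do not expect any serious obstacle. The only point requiring care is the correct identification of the unique maximal one-sided ideal of the local ring with $J(R)$, so that the inclusion $I \subseteq J(R)$ needed for Proposition \ref{proposition1} holds (in fact with equality). The remaining steps are purely formal.
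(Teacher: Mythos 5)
Your proposal is correct and is exactly the argument the paper intends: the corollary is stated without proof as an immediate consequence of Proposition \ref{proposition1}, applied with $I = J(R)$, using the fact that $R/J(R)$ is a division ring for a local ring $R$ and hence vacuously J-McCoy (its polynomial ring is a domain, so no nonzero polynomials annihilate each other). No gaps; your care about identifying the unique maximal ideal with $J(R)$ is the only nontrivial ingredient, and you handled it properly.
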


\begin{prop}\label{proposition}
Let $R_{k}$ be a ring, where $k \in I$. 
Then $R_{k}$ is right (resp. left)  J-McCoy for each $k \in I$ 
if and only if $R=\prod_{k\in I} R_{k}$ is right (resp. left)  J-McCoy.
\end{prop}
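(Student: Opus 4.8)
The plan is to reduce everything to two standard facts about a direct product $R=\prod_{k\in I}R_{k}$. First, multiplication of polynomials is computed coordinatewise, so $f(x)g(x)=0$ in $R[x]$ if and only if each coordinate product $f_{k}(x)g_{k}(x)$ vanishes in $R_{k}[x]$. Second, the Jacobson radical of a product is the product of the radicals, $J(R)=\prod_{k\in I}J(R_{k})$, so that $a=(a_{k})_{k}$ lies in $J(R)$ precisely when each $a_{k}\in J(R_{k})$. Writing $f(x)=\sum_{i}a_{i}x^{i}$ with $a_{i}=(a_{i,k})_{k}$, the condition $a_{i}r\in J(R)$ is then equivalent to $a_{i,k}r_{k}\in J(R_{k})$ for every $i$ and every $k$. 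I would set up these two facts first, since both directions are just bookkeeping on top of them.

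For the forward implication I would build the witness coordinatewise. Given nonzero $f,g\in R[x]$ with $fg=0$, for each $k$ with $f_{k}\neq 0$ and $g_{k}\neq 0$ the right J-McCoy property of $R_{k}$ supplies a nonzero $r_{k}\in R_{k}$ with $a_{i,k}r_{k}\in J(R_{k})$ for all $i$; for every other $k$ I set $r_{k}=0$, which also gives $a_{i,k}r_{k}=0\in J(R_{k})$. Then $a_{i}r\in J(R)$ for all $i$. The one point needing attention is that $r=(r_{k})_{k}$ must be nonzero: if some coordinate is of the first kind this is automatic, and otherwise $f_{k}=0$ or $g_{k}=0$ in every coordinate, so choosing an index $k_{2}$ with $g_{k_{2}}\neq 0$ (which exists because $g\neq 0$) forces $f_{k_{2}}=0$, whence $a_{i,k_{2}}=0$ and I may reset $r_{k_{2}}$ to any nonzero element of $R_{k_{2}}$ without disturbing $a_{i}r\in J(R)$. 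The left version is identical with $rb_{j}$ in place of $a_{i}r$.

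The hard part is the reverse implication, and this is where I expect the real obstacle. The natural approach is restriction: to test that a fixed factor $R_{k_{0}}$ is right J-McCoy, take nonzero $f_{k_{0}}g_{k_{0}}=0$ in $R_{k_{0}}[x]$, promote them to $F,G\in R[x]$ equal to $f_{k_{0}},g_{k_{0}}$ in coordinate $k_{0}$ and zero elsewhere, apply the J-McCoy property of $R$ to obtain a nonzero witness $r=(r_{k})_{k}$, and read off $a_{i,k_{0}}r_{k_{0}}\in J(R_{k_{0}})$. The difficulty is that this only helps if $r_{k_{0}}\neq 0$, whereas the witness $R$ returns may be supported entirely on coordinates $k\neq k_{0}$. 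This gap is not a mere artifact of the construction: since $J(R)=\prod_{k}J(R_{k})$ is a two-sided ideal, any nonzero element of $J(R)$ is a universal two-sided witness for every pair of polynomials, so as soon as one factor has $J(R_{k})\neq 0$ the global J-McCoy property of $R$ imposes no constraint whatsoever on the coordinate-$k_{0}$ behaviour. I would therefore expect the reverse implication to hold cleanly only under an extra hypothesis --- for instance that each $R_{k}$ is J-semisimple, in which case J-McCoy collapses to the usual McCoy property and the coordinatewise placement argument of the forward direction can be re-run --- and I would flag the general statement as the step most in need of either a strengthened hypothesis or a genuinely new idea for forcing the witness onto coordinate $k_{0}$.
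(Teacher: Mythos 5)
Your forward direction is correct and is essentially the paper's own argument: the paper likewise produces a witness supported in a single coordinate, splitting into the case where $f$ vanishes identically in some coordinate $t$ (there one takes $c=(0,\dots,0,1_{R_{t}},0,\dots,0)$, so $a_{i}c=0\in J(R)$) and the case where every coordinate of $f$ is nonzero, in which case one picks $t$ with $g_{t}\neq 0$ and applies the right J-McCoy hypothesis in $R_{t}$. The two write-ups differ only in bookkeeping.

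Your assessment of the converse is also correct, and the gap you describe is not hypothetical: it occurs verbatim in the paper's own proof. The paper embeds $f,g\in R_{t}[x]$ as $F,G\in R[x]$ supported on coordinate $t$, obtains a nonzero witness $c=(c_{k})_{k\in I}$ with $(0,\dots,0,a_{i},0,\dots,0)c\in J(R)=\prod_{k\in I}J(R_{k})$, and concludes that $a_{i}c_{t}\in J(R_{t})$ ``and so $R_{t}$ is right J-McCoy'' --- but nothing guarantees $c_{t}\neq 0$, which is what the definition requires. Moreover, as you observe, this cannot be repaired: since $J(R)$ is an ideal, any ring with nonzero Jacobson radical is automatically left and right J-McCoy, because any nonzero $r\in J(R)$ serves as a witness for every vanishing product. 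This turns your objection into an outright counterexample built from the paper's own data: the paper shows that $M_{3}(\mathbb{Z})$ is not right J-McCoy, yet $R=M_{3}(\mathbb{Z})\times \mathbb{Z}_{4}$ satisfies $J(R)=0\times J(\mathbb{Z}_{4})\neq 0$ and is therefore right J-McCoy; so the product is right J-McCoy while one of its factors is not, and the ``only if'' half of the proposition is false as stated. Your proposed repair is the right one, and the missing idea for forcing the witness onto coordinate $t$ is the classical McCoy trick: test $R_{t}$ with $F$ equal to $f$ in coordinate $t$ and the constant polynomial $1$ elsewhere, and $G$ equal to $g$ in coordinate $t$ and $0$ elsewhere; then the constant coefficient of $F$ forces every witness to satisfy $c_{k}\in J(R_{k})$ for $k\neq t$, so when the other factors are J-semisimple this yields $c_{k}=0$ for $k\neq t$, hence $c_{t}\neq 0$, and the restriction argument goes through.
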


\begin{proof}
Let each $R_{k}$ be a right J-McCoy ring and 
$f(x)= \sum_{i=0}^m a_{i}x^{i}, 
g(x)= \sum_{j=0}^n b_{j}x^{j} \in R[x]\setminus\{0\}$ 
such that $f(x)g(x)=0$, 
where $a_{i}=(a_{i}^{~(k)})$ , $b_{j}=(b_{j}^{~(k)})$. 
If there exists $t\in I$ such that $a^{(t)}_{i}=0$ 
for each $0\leq i \leq m$, 
then we have $a_{i}c=0 \in J(R)$ 
where $c=(0,0,\ldots, 1_{R_{t}},0,\ldots,0)$. 
Now suppose for each $k\in I$, 
there exists $0\leq i_{k} \leq m$ such that 
$a^{(k)}_{i_{k}}\neq 0$. 
Since $g(x)\neq 0$, there exists $t\in I$ 
and $0\leq j_{t} \leq n$ such that $ b^{(t)}_{j_{t}}\neq 0$. 
Consider $f_{t}(x)=\sum_{i=0}^m a^{(t)}_{i}x^{i}$ 
and $g_{t}(x)=\sum_{i=0}^n b^{(t)}_{j}x^{j} \in R_{t}[x]\setminus\{0\}$. 
We have $f_{t}(x)g_{t}(x)=0$. 
Thus there exists nonzero $c_{t}\in R_{t}$ 
such that $a^{(t)}_{i}c_{t}\in J(R_{t})$, 
for each $0\leq i \leq m$, 
since $R_{t}$ is right J-McCoy ring. 
Therefore, $a_{i}(0,0,\ldots, c_{t},0,\ldots,0) \in \Pi_{k \in I}J(R_{k})=J(R)$, 
for each $0\leq i \leq m$. 
Thus,  $R$ is right J-McCoy.\\
Conversely, suppose $R$ is right J-McCoy and $t\in I$. 
Let $f(x)= \sum_{i=0}^m a_{i}x^{i} , g(x)= \sum_{j=0}^n b_{j}x^{j}$ 
be nonzero polynomials in $R_{t}[x]$ 
such that $f(x)g(x)=0$. 
Set $$F(x)=\sum_{i=0}^m (0,0,\ldots,0,a_{i},0,\ldots,0)x^{i}~,
~G(x)=\sum_{j=0}^n (0,0,\ldots,0,b_{j},0,\ldots,0)x^{j} \in R[x]\setminus\{0\}.$$ 
Hence $F(x)G(x)=0$ and so there exists 
$0\neq c=(c_{i})$ such that 
$(0,0,\ldots,0,a_{i},0,\ldots,0)c\in J(R)=\Pi_{k \in I}J(R_{k})$. 
Therefore,  $a_{i}c_{t}\in J(R_{t})$ and so $R_{t}$ is right J-McCoy
\end{proof}

\begin{cor}\label{Corollary}
Let $D$ be a ring and $C$ a subring of $D$ with $1_{D}\in C$. Let 
\begin{center}
$R(C,D) = \{(d_{1}, \ldots,d_{n}, c, c, \ldots) \mid d_{i} \in D, c \in C, n\geq1\}$ 
\end{center} with addition and multiplication defined component-wise, 
$R(D,C)$ is a ring. 
Then $D$ is right (resp. left) J-McCoy if and only if $R(D,C)$ is right (resp. left) J-McCoy.
\end{cor}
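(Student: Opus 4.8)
The plan is to exploit a self-similar direct-product decomposition of $R(C,D)$ together with Proposition~\ref{proposition} for the ``only if'' direction, and to argue coordinatewise for the ``if'' direction. Throughout I would write an element of $R(C,D)$ as a sequence $(d_1,d_2,\ldots)$ that is eventually equal to a constant in $C$, and let $\pi_k\colon R(C,D)\to D$ denote the surjective unital projection onto the $k$-th coordinate. Two elementary facts will be used repeatedly: first, each $\pi_k$ is a ring epimorphism, so $\pi_k(J(R(C,D)))\subseteq J(D)$; second, every finitely supported sequence all of whose entries lie in $J(D)$ already lies in $J(R(C,D))$, since for such an $r$ and any $s\in R(C,D)$ the element $1-sr$ is invertible in each coordinate with an inverse that is eventually $1\in C$, hence invertible in $R(C,D)$.

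For the converse implication I would first observe that $e=(1_D,0,0,\ldots)$ is a central idempotent of $R(C,D)$: multiplication is coordinatewise, so $e$ commutes with everything. Consequently $R(C,D)\cong eR(C,D)\times(1-e)R(C,D)$, where $eR(C,D)\cong D$ via $\pi_1$, and $(1-e)R(C,D)=\{(0,d_2,d_3,\ldots,c,c,\ldots)\}$ is, after deleting the forced zero in the first slot, isomorphic to $R(C,D)$ again. Thus $R(C,D)\cong D\times R(C,D)$ as unital rings. If $R(C,D)$ is right (resp.\ left) J-McCoy, then so is $D\times R(C,D)$, and Proposition~\ref{proposition} applied to this two-factor product forces each factor to be right (resp.\ left) J-McCoy; in particular $D$ is. This is the sense in which the statement is a corollary of Proposition~\ref{proposition}.

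For the forward implication I would suppose $D$ is right J-McCoy and take $f=\sum_i a_i x^i$, $g=\sum_j b_j x^j$ in $R(C,D)[x]\setminus\{0\}$ with $fg=0$. Reading off the $k$-th coordinate gives $f_k(x)g_k(x)=0$ in $D[x]$ for every $k$, where $f_k=\sum_i a_i^{(k)}x^i$ and similarly for $g_k$. Since $g\neq 0$ there is a coordinate $k_0$ with $g_{k_0}\neq 0$. If $f_{k_0}=0$, then the sequence $r$ with $1_D$ in slot $k_0$ and $0$ elsewhere is a nonzero element of $R(C,D)$ with $a_i r=0\in J(R(C,D))$ for all $i$. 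Otherwise $f_{k_0}$ and $g_{k_0}$ are nonzero with $f_{k_0}g_{k_0}=0$, so right J-McCoyness of $D$ yields $0\neq d\in D$ with $a_i^{(k_0)}d\in J(D)$ for all $i$; placing $d$ in slot $k_0$ and $0$ elsewhere produces a nonzero $r\in R(C,D)$ whose products $a_i r$ are finitely supported with entries in $J(D)$, hence lie in $J(R(C,D))$ by the second elementary fact. The left-handed statement is entirely symmetric.

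The part most in need of care is the forward direction. The converse is immediate once the central splitting $R(C,D)\cong D\times R(C,D)$ is in hand, but the decomposition cannot be iterated to recover the forward implication—it only yields ``$R(C,D)$ J-McCoy $\Rightarrow$ $D$ J-McCoy''—so the witness for the forward direction must be constructed by hand. The only genuinely technical point there is the description of $J(R(C,D))$ invoked in the final step, namely that finitely supported sequences with entries in $J(D)$ are radical, together with the harmless observation that the distinguished coordinate $k_0$ may fall in the eventually-constant tail, where the entries lie in $C\subseteq D$ and the same argument still goes through over $D$.
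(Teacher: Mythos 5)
Your forward direction is correct, and since the paper states this corollary with no proof at all, the coordinatewise argument you give (resting on the two radical facts you isolate: $\pi_{k}\bigl(J(R(C,D))\bigr)\subseteq J(D)$ for the surjective projections, and membership of finitely supported $J(D)$-sequences in $J(R(C,D))$) is exactly what this direction needs; your handling of the case $f_{k_0}=0$ and of witnesses concentrated in a single slot is more careful than anything in the paper itself.

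The converse, however, has a genuine gap. The splitting $R(C,D)\cong D\times R(C,D)$ via the central idempotent $e=(1_D,0,0,\ldots)$ is correct, but the half of the direct-product proposition you then invoke (product right J-McCoy $\Rightarrow$ each factor right J-McCoy) is not usable: the paper's proof of that half is broken (the witness it produces may vanish in the coordinate of interest), and the statement itself is false in general. Indeed, any ring with nonzero Jacobson radical is automatically right J-McCoy, since a nonzero $r\in J(R)$ satisfies $a_{i}r\in J(R)$ for every coefficient $a_{i}$ whatsoever; hence $M_{3}(\mathbb{Z})\times\mathbb{Z}_{4}$ is right J-McCoy (take $r=(0,2)$), while $M_{3}(\mathbb{Z})$ is not right J-McCoy by the paper's own matrix example. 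The same defect appears in your reduction: applying J-McCoyness of $D\times R(C,D)$ to the pair $(f,0)$, $(g,0)$ yields a nonzero witness $(c_{1},c')$ with $a_{i}c_{1}\in J(D)$, but nothing forces $c_{1}\neq 0$; for instance $(0,c')$ is always such a witness, and it tells you nothing about $D$. The repair stays inside your own framework: pad with identities instead of zeros. Given nonzero $f=\sum_{i}a_{i}x^{i}$, $g=\sum_{j}b_{j}x^{j}\in D[x]$ with $fg=0$, set $F_{i}=(a_{i},1,1,\ldots)$ (this is precisely where $1_{D}\in C$ is used) and $G_{j}=(b_{j},0,0,\ldots)$; then $F\neq 0$, $G\neq 0$ and $FG=0$ in $R(C,D)[x]$. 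A nonzero witness $r$ with all $F_{i}r\in J(R(C,D))$ either has $r_{1}\neq 0$, in which case $a_{i}r_{1}=\pi_{1}(F_{i}r)\in J(D)$ and you are done, or has $r_{k}\neq 0$ for some $k\geq 2$, in which case $r_{k}=\pi_{k}(F_{i}r)\in J(D)\setminus\{0\}$, and then $r_{k}$ itself is a witness for $f$ because $J(D)$ is an ideal. Note that this repair genuinely uses the fact that all coordinates of $R(C,D)$ carry the same ring $D$, which is exactly why the corollary is true even though the general product statement it was meant to follow from is not.
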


\begin{thm}\label{theorem}
For a ring $R$, if $R[x]$ is  right (resp. left) J-McCoy, 
then $R$ is  right (resp. left) J-McCoy .
The converse holds if $J(R)[x] \subseteq J(R[x])$.
\end{thm}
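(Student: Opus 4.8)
The plan is to move between $R$ and $R[x]$ through a fresh indeterminate $y$, the only real input being the relationship between $J(R)[x]$ and $J(R[x])$. For the forward implication, suppose $R[x]$ is right J-McCoy and let $f(x)=\sum_{i=0}^n a_ix^i$, $g(x)=\sum_{j=0}^m b_jx^j\in R[x]\setminus\{0\}$ with $f(x)g(x)=0$. The key choice is to view $f$ and $g$ as polynomials in $y$ \emph{with coefficients in $R$}, namely $\hat f(y)=\sum_{i=0}^n a_iy^i$ and $\hat g(y)=\sum_{j=0}^m b_jy^j$ in $(R[x])[y]$, rather than as constants in $y$. Since $f(x)g(x)=0$ forces $\sum_{i+j=\ell}a_ib_j=0$ for every $\ell$, we get $\hat f(y)\hat g(y)=0$ with both factors nonzero, and applying right J-McCoyness of $R[x]$ produces a nonzero $r(x)=\sum_k c_kx^k\in R[x]$ with $a_i\,r(x)\in J(R[x])$ for all $i$.

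Now I would extract a constant. Because each $a_i$ is a degree-zero element, $a_i\,r(x)=\sum_k (a_ic_k)x^k$, so the membership $a_i r(x)\in J(R[x])$ is really a statement about the coefficients $a_ic_k$. By Amitsur's theorem $J(R[x])=N[x]$ for a nil ideal $N=J(R[x])\cap R\subseteq J(R)$; in particular $J(R[x])\subseteq J(R)[x]$, so every $a_ic_k$ lies in $J(R)$. Choosing any nonzero coefficient $s=c_{k_0}$ of $r$ then gives $a_is\in J(R)$ for all $i$, which is exactly right J-McCoyness of $R$; the left statement is symmetric.

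For the converse, assume $R$ is right J-McCoy and $J(R)[x]\subseteq J(R[x])$. Given $F(y)=\sum_i p_i(x)y^i$ and $G(y)=\sum_j q_j(x)y^j$ in $R[x][y]\setminus\{0\}$ with $F(y)G(y)=0$, I would collapse the two variables by substituting $y\mapsto x^k$ for a single $k$ chosen larger than the $x$-degree of every $p_i$ and $q_j$; this is a ring homomorphism, so $F(x^k)G(x^k)=0$ in $R[x]$. The large-$k$ choice makes the $x$-supports of the blocks $p_i(x)x^{ik}$ pairwise disjoint, so $F(x^k)$ and $G(x^k)$ are nonzero and the set of coefficients of $F(x^k)$ is precisely the union of the coefficients of the $p_i$. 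Right J-McCoyness of $R$ now yields a nonzero $s\in R$ with $as\in J(R)$ for every coefficient $a$ of $F(x^k)$, hence for every coefficient of every $p_i$. Thus each $p_i(x)s$ has all its coefficients in $J(R)$, i.e. $p_i(x)s\in J(R)[x]\subseteq J(R[x])$ by hypothesis, and since $s$ is a nonzero constant in $R[x]$ this witnesses that $R[x]$ is right J-McCoy.

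The main obstacle throughout is matching $J(R)[x]$ with $J(R[x])$. The forward direction rides for free on the automatic inclusion $J(R[x])\subseteq J(R)[x]$ coming from Amitsur's description of $J(R[x])$, whereas the reverse inclusion fails in general; this is exactly why the hypothesis $J(R)[x]\subseteq J(R[x])$ is needed only for the converse. The one remaining point requiring care is the disjoint-support bookkeeping that guarantees the substitution $y\mapsto x^k$ preserves both nonvanishing and the full coefficient data, which is what lets the one-variable J-McCoy property of $R$ feed back into a statement about the two-variable product.
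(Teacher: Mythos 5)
Your proof is correct and follows essentially the same route as the paper's: the forward direction views $f,g\in R[y]$ as elements of $(R[x])[y]$, applies J-McCoyness of $R[x]$, and extracts a nonzero coefficient of the witness polynomial using $J(R[x])\cap R\subseteq J(R)$; the converse collapses the two variables via the substitution $y\mapsto x^{k}$ and pushes the witness $s\in R$ through $J(R)[x]\subseteq J(R[x])$. If anything, your write-up is more careful than the paper's on two points: you cite Amitsur's theorem to justify that the coefficients of an element of $J(R[x])$ lie in $J(R)$, and your choice of $k$ (strictly larger than every $x$-degree of the $p_i,q_j$) actually guarantees the disjoint-support property, whereas the paper's choice $k=\sum\deg f_i+\sum\deg g_j$ can fail to separate the blocks (e.g.\ when all coefficients are constants).
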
 

\begin{proof}
 Suppose  that $R[x]$  is   right J-McCoy. 
Let $f(y)=\sum_{i=0}^{n}a_{i}y^{i}$ and 
$g(y)=\sum_{j=0} ^{m}b_{j}y^{j} $ 
be nonzero plynomials  $ \in R[y] $, such that $ f(y)g(y)=0 $. 
Since $R[x]$ is  right J-McCoy and $R\subseteq R[x] $,
then there exists $0\neq c(x) = c_{0}+c_{1}x+...+c_{k}x^{k}\in R[x]$ 
such that $a_{i}c(x) \in J(R[x])$ 
and so $a_ic_i \in J(R[x]) \cap R \subseteq J(R)$ 
for all $i=0,1, \cdots ,n$. 
Since $c(x)$ is nonzero, there exists $c_l \neq 0$ 
such that $a_ic_l \in J(R)$ for $i=0,1, \cdots ,n$.\\
Conversely, suppose that $R$ is right J-McCoy and 
$f(y)g(y)=0$ for nonzero polynomials 
$f(y)=f_{0}+f_{1}y+...+f_{m}y^{m}$ and 
$g(y)=g_{0}+g_{1}y+...+g_{n}y^{n}$ in $(R[x])[y].$ 
Take the positive integer k with 
$k=\sum_{i=0}^m deg f_{i} + \sum_{j=0}^n degg_{j}$ 
where the degree of the zero polynomial is taken to be zero. 
Then $f(x^{k})$ and $g(x^{k})$ are nonzero polynomials in 
$R[x]$ and $f(x^{k})g(x^{k})=0$, 
since the set of coefficients of the $f_{i}$'s and $g_{j}$'s 
coincide with the set of coefficients of  $f(x^{k})$ and $g(x^{k})$. 
Since R is right J-McCoy, there exists a nonzero element 
$c \in R$ such that $a_{i}c \in J(R)$, 
for any coefficient $a_{i}$ of $f_{i}(x)$. 
So $f_{i}c \in J(R)[x] \subseteq J(R[x])$. 
Thus $R[x]$ is right J-McCoy.
 \end{proof}
 
\begin{prop}\label{proposition}
Let $R$ be a right (resp. left) J-McCoy ring 
and $e$ be an idempotent element of $R$. 
Then $eRe$ is a right (resp. left) J-McCoy ring. 
The converse holds $R$ is an abelian ring.
\end{prop}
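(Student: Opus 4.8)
The plan is to treat the two directions separately, relating the Jacobson radical of the corner ring to that of $R$ via the standard identity $J(eRe)=eRe\cap J(R)=eJ(R)e$, which I will use throughout. For the forward direction, note first that $eRe$ is a ring with identity $e$ and that $(eRe)[x]$ sits inside $R[x]$. So if $f(x)=\sum_{i}a_{i}x^{i}$ and $g(x)=\sum_{j}b_{j}x^{j}$ are nonzero polynomials over $eRe$ with $f(x)g(x)=0$, then the same equality holds in $R[x]$, and since $R$ is right J-McCoy there exists $0\neq c\in R$ with $a_{i}c\in J(R)$ for all $i$. Because each coefficient satisfies $a_{i}=ea_{i}e$, and in particular $a_{i}e=a_{i}=ea_{i}$, the natural candidate for the corner annihilator is $s=ece\in eRe$: indeed $a_{i}s=a_{i}ece=a_{i}ce=e(a_{i}ce)e$, which lies in $eRe\cap J(R)=J(eRe)$ since $J(R)$ is an ideal. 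Thus $a_{i}s\in J(eRe)$ for every $i$, and it remains only to secure that $s$ is nonzero.

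The main obstacle is exactly this nonvanishing: a priori the element $c$ supplied by the J-McCoy property of $R$ could satisfy $ece=0$ (or even $ec=0$). My plan to handle it is a short case analysis exploiting that the condition $a_{i}x\in J(R)$ depends only on $ex$ (because $a_{i}=a_{i}e$), so that the set $K=\{x\in R: a_{i}x\in J(R)\ \text{for all }i\}$ is a right ideal closed under $x\mapsto ex$ and containing $c$, hence also $ec$. If $ece\neq0$ we are done; if $ece=0$ I will analyze $ec$ and $ce$ inside $K$, using a nonzero coefficient of $g$ (some $b_{t}=eb_{t}e\neq0$) to translate a nonzero element of $K$ into the corner $eRe$, and in the residual case $ec=0$ I will fall back on the resulting exact annihilation $a_{i}c=a_{i}(ec)=0$. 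I expect this nonvanishing step to be the only delicate point; the radical bookkeeping above is routine.

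For the converse I will use the hypothesis that $R$ is abelian, which forces every idempotent to be central. Then $e$ is central, so $eRe=eR=Re$ and $1-e$ is again a central idempotent; the orthogonal pair $e,1-e$ yields the ring direct-product decomposition $R\cong eR\times(1-e)R=eRe\times(1-e)R(1-e)$. Applying the earlier Proposition on direct products, $R$ is right (resp. left) J-McCoy if and only if both factors $eRe$ and $(1-e)R(1-e)$ are. Combined with the forward direction applied to the idempotents $e$ and $1-e$, this gives the stated equivalence, and it pinpoints the role of the abelian hypothesis: centrality of $e$ is precisely what upgrades the corner $eRe$ to a direct factor of $R$, so that the J-McCoy property can be transported back up.
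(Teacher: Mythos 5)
Your forward direction is in substance the paper's own proof: view the vanishing product in $R[x]$, invoke the J-McCoy hypothesis on $R$ to get $c$, and push it into the corner as $s=ece$, using $eJ(R)e=J(eRe)$. You deserve credit for flagging what the paper silently skips, namely that $ece$ may vanish; but your sketched repair cannot be carried out, because the forward statement is false in general --- even for a central idempotent. Take $R=M_2(k)\times k[[t]]$ over a field $k$ and $e=(I_2,0)$. Any ring with nonzero Jacobson radical is vacuously right and left J-McCoy: a nonzero $r\in J(R)$ satisfies $a_ir\in J(R)$ for every coefficient of every polynomial, so it is a universal witness. Hence $R$, with $J(R)=0\times tk[[t]]\neq 0$, is J-McCoy. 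But $eRe\cong M_2(k)$ has $J(eRe)=0$, and $M_2(k)$ is not right McCoy: $f(x)=E_{12}+E_{11}x$ and $g(x)=E_{11}-E_{21}x$ satisfy $f(x)g(x)=0$, while $E_{11}c=E_{12}c=0$ forces $c=0$. So $eRe$ is not right J-McCoy, and for this pair every witness $c\in R$ (necessarily of the form $(0,p)$) has $ece=0$: the case you called ``the only delicate point'' actually occurs and is unfixable. Your specific fallback moves also fail on their own terms: in the case $ec=0$ the identity $a_ic=0$ gives information about $c$ but produces no nonzero element of $eRe$, and translating a nonzero $u\in K$ into the corner requires $uRe\neq 0$, which is exactly what fails here. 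The paper's proof breaks at the same point; it simply never acknowledges it.

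Your converse is genuinely different from the paper's, but it is circular. The direct-product proposition gives: $R$ is J-McCoy if and only if both $eRe$ and $(1-e)R(1-e)$ are. To conclude that $R$ is J-McCoy you therefore need the $(1-e)$-corner to be J-McCoy as well, and your only source for that --- ``the forward direction applied to $1-e$'' --- presupposes that $R$ is J-McCoy, which is precisely the statement being proved. The paper's converse avoids the second corner entirely: since $e$ is central, $f(x)g(x)=0$ gives $(ef(x)e)(eg(x)e)=0$ in $(eRe)[x]$, and the hypothesis on $eRe$ yields a nonzero $s\in eRe$ with $a_is=(ea_ie)s\in J(eRe)=eJ(R)\subseteq J(R)$. (Even that argument tacitly assumes $ef(x)e$ and $eg(x)e$ are nonzero; the case $ef(x)e=0$ is harmless, since then $a_ie=0$ and $r=e$ itself is a witness, but the case $eg(x)e=0$ with $(1-e)f(x)\neq 0$ remains a genuine gap.) If you want a defensible write-up of the converse, abandon the product route and argue on the single corner as the paper does, treating the degenerate cases separately; and note that, in light of the counterexample above, the proposition as a whole cannot stand without restricting the forward direction as well.
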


\begin{proof}
Consider $f(x)= \sum_{i=0}^n ea_{i}ex^{i} ,  g(x)= \sum_{j=0}^m eb_{j}ex^{j} \in (eRe)[x]\setminus\{0\}$ 
such that $f(x)g(x)=0$. 
Since $R$ is a right J-McCoy ring, 
there exists $s \in R$ such that $(ea_{i}e) s  \in J(R)$. 
So $(ea_{i}e) ese \in eJ(R)e=J(eRe)$. 
Hence $eRe$ is right J-McCoy.
Now, assume that ~$eRe$ is a right J-McCoy ring. 
Consider ~$f(x)= \sum_{i=0}^n a_{i}x^{i}, 
g(x)= \sum_{j=0}^m b_{j}x^{j} \in R[x]\setminus\{0\}$ 
such that $f(x)g(x)=0$. 
Clearly, $ef(x)e, eg(x)e \in (eRe)[x]$ and 
$(ef(x)e)(eg(x)e)$ $=0$, since $e$ is a central idempotent element of $R$. 
Then there exists $s \in eRe$ such that 
$(ea_{i}e)s=(a_{i})s \in J(eRe)=eJ(R) \subset J(R)$. 
Hence, $R$ is right J-McCoy.
\end{proof}

Let $R$ be a ring and $\sigma$ denote an endomorphism of $R$ 
with $\sigma(1)=1$. In \cite {a6} the authors 
introduced skew triangular matrix ring as a set of 
all triangular matrices with addition point-wise and a 
new  multiplication subject to the condition 
$E_{ij}r=\sigma^{j-i}E_{ij}$. So $(a_{ij})(b_{ij})=(c_{ij})$, 
where $c_{ij}=a_{ij}b_{ij}+a_{i,i+1}\sigma(b_{i+1,j})+...+a_{ij}\sigma^{j-i}(b_{jj})$, 
for each $i \leq j$ and denoted it by $T_{n}(R,\sigma)$.
The subring of the skew triangular matrices 
with constant mail diagonal is denoted by $S(R, n, \sigma)$; 
and the subring of the skew triangular matrices 
with constant diagonals is denoted by $T(R, n, \sigma)$. 
We can denote $A=(a_{ij}) \in T(R, n, \sigma)$ by $(a_{11},...,a_{1n})$. 
Then $T(R, n, \sigma)$ is a ring 
with addition point-wise and multiplication given by $(a_{0},...,a_{n-1})(b_{0},...,b_{n-1})=(a_{0}b_{0}, a_{0}*b_{1}+a_{1}*b_{0},...,a_{0}*b_{n-1}+...+a_{n-1}*b_{0})$, 
with $a_{i}*b_{j}=a_{i}\sigma^{i}(b_{j})$, for each $i$ and $j$. 
Therefore, clearly one can see that 
$T(R, n, \sigma) \cong R[x;\sigma]/(x^{n})$ 
is the ideal generated by $x^{n}$ in $R[x;\sigma]$.
we consider the following two subrings of $S(R, n, \sigma)$, as follows (see \cite{a6}):
\begin{center}
$A(R,n,\sigma)=\sum_{j=1}^{[\frac{n}{2}]} \sum_{i=1}^{n-j+1}a_{j}E_{i,i+j-1}+\sum_{j=[\frac{n}{2}]+1}^{n} \sum_{i=1}^{n-j+1}a_{i,i+j-1}E_{i,i+j-1}$,\\
 $B(R,n,\sigma)=\{A+rE_{1k}\mid A \in A(R,n,\sigma) and r \in R \}$~~~~~ $n=2k \geq 4$
\end{center}
In the special case, when $\sigma =id_{R}$, 
we use $S(R, n), A(R, n), B(R, n)$ and $T(R, n)$ instead of $S(R, n, \sigma), A(R, n, \sigma), B(R, n, \sigma)$ and $T(R, n, \sigma)$, respectively.

\begin{prop}\label{proposition}
Let $R$ be a ring. 
Then $S$ is right J-McCoy ring, for $n \geq 2$, 
where $S$ is one of the rings 
$T_{n}(R, \sigma), S(R, n, \sigma), T(R, n, \sigma), A(R, n, \sigma)$ 
or , $B(R, n, \sigma)$.
\end{prop}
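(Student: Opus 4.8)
The plan is to exploit the fact that every ring on the list is a (skew) triangular matrix ring, so its strictly upper triangular part is a nilpotent two-sided ideal contained in the Jacobson radical; this makes the right J-McCoy condition essentially automatic. Concretely, for $S$ equal to one of $T_n(R,\sigma), S(R,n,\sigma), T(R,n,\sigma), A(R,n,\sigma), B(R,n,\sigma)$, let $N_S$ denote the set of strictly upper triangular matrices lying in $S$. First I would check that $N_S$ is a two-sided ideal of $S$: using the skew product $c_{ij} = \sum_{i \le k \le j} a_{ik}\sigma^{k-i}(b_{kj})$, a diagonal entry $c_{ii}$ of a product involving one strictly upper triangular factor forces $i \le k < i$ and hence vanishes, so multiplying a strictly upper triangular matrix by any element of $S$ on either side stays strictly upper triangular. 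Since any product of $n$ strictly upper triangular $n\times n$ matrices is zero, $N_S^{\,n}=0$, so $N_S$ is a nilpotent ideal and therefore $N_S \subseteq J(S)$.

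The key element is the corner matrix unit $r = E_{1n}$. The second step is to verify $r \in S$ in each case: $E_{1n}$ is upper triangular with zero diagonal, so it lies in $T_n(R,\sigma)$ and in $S(R,n,\sigma)$; in the notation $(a_0,\dots,a_{n-1})$ it equals $(0,\dots,0,1)$, so it lies in $T(R,n,\sigma)$; and its only nonzero entry sits on the top diagonal $j=n$, which since $n > [\frac{n}{2}]$ for $n\geq 2$ falls in the free range of $A(R,n,\sigma)$, whence $E_{1n}\in A(R,n,\sigma)\subseteq B(R,n,\sigma)$. Thus $0 \neq r \in S$ for every $S$ on the list.

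Now take any nonzero $f(x)=\sum_i M_i x^i \in S[x]$ together with $g$ satisfying $f(x)g(x)=0$ (though the hypothesis $fg=0$ will not actually be needed). Applying the product formula with $B=E_{1n}$, the only surviving term forces $k=1$ and $j=n$, hence $i=1$ and exponent $\sigma^{0}=\mathrm{id}$, giving $M_i E_{1n} = (M_i)_{11}\,E_{1n}$. This is strictly upper triangular, so $M_i r \in N_S \subseteq J(S)$ for every coefficient $M_i$ of $f$, and since $r \neq 0$ this exhibits the required element, proving that $S$ is right J-McCoy; the left case is identical via $E_{1n}\,s = \sigma^{n-1}(s_{nn})\,E_{1n} \in N_S$.

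The only genuinely delicate points are bookkeeping ones: confirming that $E_{1n}$ really is an admissible element of $A(R,n,\sigma)$ and $B(R,n,\sigma)$ under their somewhat involved definitions (which reduces to the inequality $n > [\frac{n}{2}]$ placing the top diagonal in the free part), and keeping the $\sigma$-twists straight so that the relevant exponent is $\sigma^0$. Everything else is routine. In fact the argument proves something stronger than stated, namely $S\,E_{1n}\subseteq J(S)$, so here the J-McCoy property is forced purely by the size of the Jacobson radical rather than by any cancellation coming from $f(x)g(x)=0$.
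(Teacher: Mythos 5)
Your proof is correct and follows essentially the same route as the paper's: the paper's (much terser) argument also takes $r=E_{1n}$ and observes that $A_iE_{1n}=a^{(i)}_{11}E_{1n}\in J(S)$, without using $f(x)g(x)=0$ at all. Your write-up merely supplies the details the paper leaves implicit --- that $E_{1n}$ lies in each of the five rings, that the strictly upper triangular part is a nilpotent ideal contained in $J(S)$, and the $\sigma$-twist bookkeeping --- so it is a fleshed-out version of the same proof.
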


\begin{proof}
Let $f(x)=A_{0}+A_{1}x+...+A_{p} x^{p} ,  g(x)=B_{0}+B_{1}x+...+B_{q} x^{q}$ 
be elements of $S[x]$ satisfying $f(x)g(x)=0$ 
where the $(1,1)-th$ entry of $A_{i}$ is $a^{(i)}_{11}$. 
Then $A_{i}E_{1n}=a^{(i)}_{11}E_{1n} \in J(S)$ and the proof is complete.
\end{proof}

Let $ R $ and $ S $ be two rings, 
and Let $ M $ be an $ (R,S) $-bimodule. 
This means that $ M $ is a left $ R $-module and a right $ S $-module such that 
$ (rm)s=r(ms) $
for all $ r\in R $, $ m\in M $, and $ s\in S $. 
Given such a bimodule $ M $ we can form 
\begin{center}
$  T =\bigl( 
\begin{smallmatrix}
  R&M\\ 0&S
\end{smallmatrix} \bigr)=\big\lbrace \bigl( \begin{smallmatrix}r&m\\ 0&s \end{smallmatrix} \bigr) :  r\in R , m\in M , s\in S \big\rbrace$
\end{center} and define a multiplication on 
$ T  $ by using formal matrix multiplication:
\begin{center}

$ \bigl(
 \begin{smallmatrix}
  r&m\\ 0&s
\end{smallmatrix} \bigr)\bigl(
 \begin{smallmatrix}
  r^{\prime}&m^{\prime}\\ 0&s^{\prime}
\end{smallmatrix} \bigr)=\bigl(
 \begin{smallmatrix}
  rr^{\prime}&rm^{\prime}+ms^{\prime}\\ 0&ss^{\prime}
\end{smallmatrix} \bigr). $

\end{center}
This ring construction is called triangular ring $ T $. 

\begin{prop}\label{proposition}
Let $ R $ and $ S $ be two rings and $ T $ be the triangular ring 
$
T=
\bigl( \begin{smallmatrix}
  R&M\\ 0&S
\end{smallmatrix} \bigr) 
$
(where $ M $ is an $ (R,S) $-bimodule). 
Then the rings $ R $ and $ S $ are right (resp. left) J-McCoy 
if and only if $ T $ is right (resp. left) J-McCoy. 
\end{prop}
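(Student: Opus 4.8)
The plan is to pin down the Jacobson radical of the triangular ring first and then treat the two implications separately. I would record the standard description
\[
J(T)=\bigl(\begin{smallmatrix} J(R)&M\\ 0&J(S)\end{smallmatrix}\bigr),
\]
which holds because $N=\bigl(\begin{smallmatrix} 0&M\\ 0&0\end{smallmatrix}\bigr)$ is a square-zero (hence nil, hence radical) ideal of $T$ with $T/N\cong R\times S$, whose radical is $J(R)\times J(S)$; pulling back gives the formula. Every membership test below then reduces to the two diagonal corners: a coefficient $\bigl(\begin{smallmatrix} a&m\\ 0&c\end{smallmatrix}\bigr)$ lies in $J(T)$ if and only if $a\in J(R)$ and $c\in J(S)$, with \emph{no} condition on the $M$-entry.

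For the implication ``$R$ and $S$ right J-McCoy $\Rightarrow$ $T$ right J-McCoy'', I would take nonzero $f,g\in T[x]$ with $fg=0$ and write the coefficients in block form $\bigl(\begin{smallmatrix} a_i&m_i\\ 0&c_i\end{smallmatrix}\bigr)$ and $\bigl(\begin{smallmatrix} b_j&n_j\\ 0&d_j\end{smallmatrix}\bigr)$. Reading off the diagonal corners of $fg=0$ yields $f_Rg_R=0$ and $f_Sg_S=0$ for the induced polynomials over $R$ and $S$. The target is a nonzero $r=\bigl(\begin{smallmatrix}\alpha&\mu\\ 0&\gamma\end{smallmatrix}\bigr)$ with $a_i\alpha\in J(R)$ and $c_i\gamma\in J(S)$ for all $i$ (the $M$-corner being free by the radical computation). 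I would then run a short exhaustive case analysis on which of the induced diagonal polynomials vanish: when $f_R,g_R$ are both nonzero the right J-McCoy property of $R$ supplies a nonzero $\alpha$ and I set $r=\bigl(\begin{smallmatrix}\alpha&0\\ 0&0\end{smallmatrix}\bigr)$, symmetrically for $S$; when a diagonal part of $f$ is identically zero (say $f_R=0$, so each $a_i\in J(R)$) I place an idempotent corner $\bigl(\begin{smallmatrix}1&0\\ 0&0\end{smallmatrix}\bigr)$ or $\bigl(\begin{smallmatrix}0&0\\ 0&1\end{smallmatrix}\bigr)$; and when both diagonal parts of $g$ vanish, then $g\neq0$ forces $M\neq0$ and I take $r=\bigl(\begin{smallmatrix}0&\mu\\ 0&0\end{smallmatrix}\bigr)$ with $\mu\neq0$. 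Each branch produces a genuinely nonzero witness, so $T$ is right J-McCoy; the left version is dual.

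For the converse, ``$T$ right J-McCoy $\Rightarrow$ $R,S$ right J-McCoy'', the natural move is lifting. Given nonzero $p,q\in R[x]$ with $pq=0$, I form the pure-$R$ lifts $F=\sum\bigl(\begin{smallmatrix}a_i&0\\ 0&0\end{smallmatrix}\bigr)x^i$ and $G=\sum\bigl(\begin{smallmatrix}b_j&0\\ 0&0\end{smallmatrix}\bigr)x^j$ in $T[x]$; these are nonzero with $FG=0$, so J-McCoyness of $T$ yields a nonzero $r=\bigl(\begin{smallmatrix}\alpha&\mu\\ 0&\gamma\end{smallmatrix}\bigr)$ with $a_i\alpha\in J(R)$ for all $i$, and projecting to the $(1,1)$-corner gives $\alpha\in R$ with $a_i\alpha\in J(R)$. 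The treatment of $S$ via the $(2,2)$-lift is analogous.

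The step I expect to be the main obstacle is precisely the end of this converse: the witness $r$ is only guaranteed nonzero \emph{as an element of $T$}, and because $N\subseteq J(T)$ it may be supported entirely on the $M$-corner, i.e. $\alpha=\gamma=0$ with $\mu\neq0$, in which case the projected element $\alpha=0$ is useless as a witness for $R$. So the crux is to force a witness whose relevant diagonal entry is nonzero. I would attempt this by choosing the lifts so that any valid witness must act nontrivially on the $R$-corner, or by playing left and right J-McCoyness of $T$ off against each other; but I anticipate real difficulty here, since the same observation ($N\subseteq J(T)$) shows $T$ is automatically J-McCoy whenever $M\neq0$, and a correct proof of the converse must confront exactly this nonzero-diagonal issue head-on.
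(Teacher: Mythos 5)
Your forward implication is correct, and it is in fact tighter than the paper's own argument. The paper passes from $f(x)g(x)=0$ to $f_{r}(x)g_{r}(x)=0$ and $f_{s}(x)g_{s}(x)=0$ and immediately invokes right J-McCoyness of $R$ and $S$ to produce $c$ and $d$; this is only legitimate when all four projected polynomials are nonzero, since Definition 2.1 applies only to pairs of nonzero polynomials. Your four-way case analysis, with the idempotent-corner witnesses for vanishing diagonal parts of $f$ and the $M$-corner witness when both diagonal parts of $g$ vanish, is exactly what is needed to close that gap, and your derivation of $J(T)=\bigl(\begin{smallmatrix} J(R)&M\\ 0&J(S)\end{smallmatrix}\bigr)$ via the square-zero ideal $N$ is the same fact the paper uses without proof.

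The obstacle you flag in the converse is not a defect of your approach; it is a genuine error in the paper, and in fact the converse direction is false. The paper's proof takes the nonzero witness $\bigl(\begin{smallmatrix} c&m\\ 0&d\end{smallmatrix}\bigr)\in T$ and concludes that $R$ and $S$ are right J-McCoy from $r_{i}c\in J(R)$ and $s_{i}d\in J(S)$; but nonzeroness of the matrix does not make $c$ or $d$ nonzero, and Definition 2.1 demands a nonzero witness --- precisely the ``supported on the $M$-corner'' scenario you describe. Your observation that $T$ is automatically right J-McCoy whenever $M\neq 0$ turns this into a refutation. Indeed, more generally, any ring $A$ with $J(A)\neq 0$ is left and right J-McCoy, because any nonzero $r\in J(A)$ satisfies $ar\in J(A)$ and $ra\in J(A)$ for every $a\in A$, so $r$ is a universal witness. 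Now take $R=M_{3}(\mathbb{Z})$, which the paper itself proves is not right J-McCoy (the example immediately following this proposition), $S=\mathbb{Z}$, and $M=M_{3}(\mathbb{Z})$ viewed as an $(R,S)$-bimodule. Then $J(T)\supseteq\bigl(\begin{smallmatrix} 0&M\\ 0&0\end{smallmatrix}\bigr)\neq 0$, so $T$ is right J-McCoy, while $R$ is not. So you were right to stop where you did: only the implication you proved ($R$ and $S$ J-McCoy implies $T$ J-McCoy) can be established; the reverse implication survives only in degenerate situations such as $M=0$ and $J(R)=J(S)=0$, where it reduces to the McCoy property of a direct product.
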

\begin{proof}
Assume that
$ R $ and
$ S $
are two right J-McCoy rings, and 
\begin{center}
$ f(x)=\bigl( \begin{smallmatrix}
  r_{0}&m_{0}\\ 0&s_{0}
\end{smallmatrix} \bigr)  +\bigl( \begin{smallmatrix}
  r_{1}&m_{1}\\ 0&s_{1}
\end{smallmatrix} \bigr)x+\cdots +\bigl( \begin{smallmatrix}
  r_{n}&m_{n}\\ 0&s_{n}
\end{smallmatrix} \bigr)x^{n}  $,
\end{center}
\begin{center}
$ g(x)=\bigl( \begin{smallmatrix}
  r^{\prime}_{0}&m^{\prime}_{0}\\ 0&s^{\prime}_{0}
\end{smallmatrix} \bigr)  +\bigl( \begin{smallmatrix}
  r^{\prime}_{1}&m^{\prime}_{1}\\ 0&s^{\prime}_{1}
\end{smallmatrix} \bigr)x+\cdots +\bigl( \begin{smallmatrix}
  r^{\prime}_{m}&m^{\prime}_{m}\\ 0&s^{\prime}_{m}
\end{smallmatrix} \bigr)x^{m}  \in T[x]$
\end{center} satisfy 
$ f(x)g(x)=0 $.
Define
\begin{center}
$ f_{r}(x) =r_{0}+r_{1}x+\cdots + r_{n}x^{n}$,
$  g_{r}(x) =r^{\prime}_{0}+r^{\prime}_{1}x+\cdots + r^{\prime}_{m}x^{m}\in R[x] $
\end{center}
 and
\begin{center}
$ f_{s}(x) =s_{0}+s_{1}x+\cdots + s_{n}x^{n}  $,
$  g_{s}(x) =s^{\prime}_{0}+s^{\prime}_{1}x+\cdots + s^{\prime}_{m}x^{m} \in S[x]. $
\end{center}
From 
$ f(x)g(x)=0 $, we have 
$ f_{r}(x)g_{r}(x)= f_{s}(x)g_{s}(x)=0 $, and since $ R $ and $ S $ are right J-McCoy rings, so there exists $c \in R$ and $d \in S$ such that $ r_{i}c\in J(R) $ and $  s_{i}d\in J(S) $ for any 
$ 1\leq i \leq n $ and $ 1\leq j \leq m $.
Now from the fact that 
$ J(T)= \bigl( \begin{smallmatrix}
  J(R)&M\\ 0&J(S)
\end{smallmatrix} \bigr) $, we obtain that 
$ \bigl( \begin{smallmatrix}
  r_{i}&m_{i}\\ 0&s_{i}
\end{smallmatrix} \bigr)\bigl( \begin{smallmatrix}
  c & 0\\ 0& d
\end{smallmatrix} \bigr) \in J(T) $ for any
$ i,j $. Hence 
$ T $
is a right J-McCoy ring.
Conversely, let $ T $ be a right J-McCoy ring,
$ f_{r}(x) =r_{0}+r_{1}x+\cdots + r_{n}x^{n}$,
$  g_{r}(x) =r^{\prime}_{0}+r^{\prime}_{1}x+\cdots + r^{\prime}_{m}x^{m}\in R[x] $, 
such that
$ f_{r}(x)g_{r}(x)=0 $, and 
$ f_{s}(x) =s_{0}+s_{1}x+\cdots + s_{n}x^{n}  $,
$  g_{s}(x) =s^{\prime}_{0}+s^{\prime}_{1}x+\cdots + s^{\prime}_{m}x^{m} \in S[x] $, 
such that
$ f_{s}(x)g_{s}(x)=0 $.
If
\begin{center}
$ f(x)=\bigl( \begin{smallmatrix}
  r_{0}&0\\ 0&s_{0}
\end{smallmatrix} \bigr)  +\bigl( \begin{smallmatrix}
  r_{1}&0\\ 0&s_{1}
\end{smallmatrix} \bigr)x+\cdots +\bigl( \begin{smallmatrix}
  r_{n}&0\\ 0&s_{n}
\end{smallmatrix} \bigr)x^{n}  $
and
$ g(x)=\bigl( \begin{smallmatrix}
  r^{\prime}_{0}&0\\ 0&s^{\prime}_{0}
\end{smallmatrix} \bigr)  +\bigl( \begin{smallmatrix}
  r^{\prime}_{1}&0\\ 0&s^{\prime}_{1}
\end{smallmatrix} \bigr)x+\cdots +\bigl( \begin{smallmatrix}
  r^{\prime}_{m}&0\\ 0&s^{\prime}_{m}
\end{smallmatrix} \bigr)x^{m}  \in T[x]$
\end{center}
Then from
$ f_{r}(x)g_{r}(x)=0  $ and $ f_{s}(x)g_{s}(x)=0  $ it folows that
$ f(x)g(x)=0 $.
Since 
$ T $ is a right J-McCoy ring so there exists $\bigl( \begin{smallmatrix}
  c &m\\ 0& d 
\end{smallmatrix} \bigr) \in T$ such that
$ \bigl( \begin{smallmatrix}
  r_{i}&0\\ 0&s_{i}
\end{smallmatrix} \bigr)\bigl( \begin{smallmatrix}
  c&m\\ 0&d
\end{smallmatrix} \bigr)\in J (T)=\bigl( \begin{smallmatrix}
  J(R)&M\\ 0&J(S)
\end{smallmatrix} \bigr) $.
Thus
$ r_{i} c\in J(R)$ and
$ s_{i} d \in J(S)$ for any $ i,j $. This shows that 
$ R $ and $ S $ are right J-McCoy rings.
\end{proof}

The following example shows that, if $R$ is right J-McCoy ring, 
then $M_{n}(R)$ is  not necessary right J-McCoy.

\begin{examp}\label{examp 1}
 Let $\mathbb{Z}$ be the set of integers. 
 It's clear that $\mathbb{Z}$ is J-McCoy, 
 but $M_{3}(\mathbb{Z})$ is not right J-McCoy. 
 For
 \begin{center}
$f(x)=\left(\begin{array}{cccc}
 1 & x &  x^{2} \\ x^{3} & x^{4} &  x^{5} \\ x^{6} & x^{7} &  x^{8}
\end{array}\right)$~and~$g(x)=\left(\begin{array}{cccc}
 x & x &  x \\ -1 & -1 &  -1 \\ 0 & 0 &  0
\end{array}\right)$
\end{center} 
in $M_{3}(\mathbb{Z})[x]$, 
we have $f(x)g(x)=0$. 
Assume to the contrary that $M_{3}(\mathbb{Z})$ is right J-McCoy, 
then there exists $c=(c_{ij}) \in M_{3}(\mathbb{Z})$ 
such that $(E_{ij}c) \in J(M_{3}(\mathbb{Z}))$ 
for $i,j=1,2,3$. 
Assume $i=1, j=2$. 
Therefore, $\left(\begin{array}{cccc}
 1 & 0 &  0 \\ -c_{11} & 1-c_{12} &  -c_{13} \\ 0 & 0 &  1
\end{array}\right) $
is invertible in $M_{3}(\mathbb{Z})$. 
Hence 
$$\left(\begin{array}{cccc}
 1 & 0 &  0 \\ c_{11}(1-c_{12})^{-1} & (1-c_{12})^{-1} & c_{13}(1-c_{12})^{-1}  \\ 0 & 0 &  1
\end{array}\right) \in M_{3}(\mathbb{Z})$$
and so $c_{12}=0$. 
  Similarly, $c_{ij}=0$ for each $i , j$. This implies $c=0$,  
  which is a contradiction.
\end{examp}

 Let $S$ denote a multiplicatively closed subset of a ring 
 $R$ consisting of central regular elements. 
 Let $RS^{-1}$ be the localization of $R$ at $S$. Then we have:
 
\begin{thm}\label{theorem 0}
For a ring $R$, if $R$ is  right (resp. left) J-McCoy, 
then $RS^{-1}$ is  right (resp. left) J-McCoy.
\end{thm}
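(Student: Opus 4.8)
The plan is to reduce the problem over $RS^{-1}$ to the corresponding problem over $R$ by clearing denominators, invoke the right J-McCoy hypothesis on $R$, and then push the witnessing element back up to $RS^{-1}$. Since the left case is entirely symmetric, I would treat only the right case. So suppose $F(x)=\sum_{i=0}^{n}\alpha_i x^i$ and $G(x)=\sum_{j=0}^{m}\beta_j x^j$ are nonzero elements of $(RS^{-1})[x]$ with $F(x)G(x)=0$. Because $S$ consists of central regular elements, I can place all the finitely many coefficients over one common denominator $u\in S$ (take $u$ a product of the individual denominators), writing $\alpha_i=a_iu^{-1}$ and $\beta_j=b_ju^{-1}$ with $a_i,b_j\in R$. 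Putting $f(x)=\sum_{i=0}^{n}a_ix^i$ and $g(x)=\sum_{j=0}^{m}b_jx^j$ in $R[x]$, centrality of $u$ gives $F=u^{-1}f$ and $G=u^{-1}g$, hence $F(x)G(x)=u^{-2}f(x)g(x)$.

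Since $u^{-2}$ is a unit of $RS^{-1}$, the relation $F(x)G(x)=0$ forces $f(x)g(x)=0$ in $(RS^{-1})[x]$. Now regularity of the elements of $S$ makes the canonical map $R\hookrightarrow RS^{-1}$, and therefore $R[x]\hookrightarrow (RS^{-1})[x]$, injective; as $f(x)g(x)\in R[x]$, I conclude $f(x)g(x)=0$ already in $R[x]$. Moreover $f$ and $g$ are nonzero, because $F$ and $G$ are nonzero and $u$ is a unit. At this point the right J-McCoy property of $R$ applies and yields a nonzero $c\in R$ with $a_ic\in J(R)$ for every $i$.

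The natural candidate witness over $RS^{-1}$ is $\gamma:=c/1$, which is nonzero by injectivity of $R\hookrightarrow RS^{-1}$. Using centrality of $u$ once more, $\alpha_i\gamma=a_iu^{-1}c=(a_ic)u^{-1}$; and since $u^{-1}$ is a unit while $J(RS^{-1})$ is a two-sided ideal, one has $\alpha_i\gamma\in J(RS^{-1})$ if and only if the image $(a_ic)/1$ lies in $J(RS^{-1})$. Thus the whole theorem comes down to a single compatibility statement: the image of $J(R)$ in $RS^{-1}$ must be contained in $J(RS^{-1})$, i.e. $J(R)S^{-1}\subseteq J(RS^{-1})$.

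That last containment is exactly where I expect the main difficulty to lie, and it is the step I would spend the most care on. The natural route is the unit characterization of the radical: for $w\in J(R)$ and an arbitrary $\rho=r s^{-1}\in RS^{-1}$, one wants $1-\rho\,(w/1)$ to be a unit of $RS^{-1}$, and writing $1-r s^{-1}w=(s-rw)s^{-1}$ with $rw\in J(R)$ isolates the obstruction to invertibility of $s-rw$ after inverting $S$. Unlike the nil radical used for NC-McCoy rings—where $w^k=0$ instantly gives $(w s^{-1})^k=0$ by centrality—the Jacobson radical does not transport through a localization for free, so this is genuinely the delicate point: it is precisely here that centrality and regularity of $S$ must be exploited to control the units of $RS^{-1}$. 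Once $J(R)S^{-1}\subseteq J(RS^{-1})$ is secured, the argument closes immediately, giving $\alpha_i\gamma\in J(RS^{-1})$ for all $i$ and hence that $RS^{-1}$ is right J-McCoy.
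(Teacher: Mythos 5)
Your reduction steps are fine and are essentially the paper's own: clear denominators using centrality and regularity of $S$, deduce $f(x)g(x)=0$ in $R[x]$ with $f,g\neq 0$, invoke the hypothesis to get a nonzero $c\in R$ with $a_ic\in J(R)$, and observe that everything now hinges on the image of $J(R)$ landing inside $J(RS^{-1})$. The problem is that you never prove this last containment; you only announce that it is the delicate point you would work on. That is not a deferred verification but a genuine hole, because the containment is false in general. Take $R=\mathbb{Z}_{(p)}$, the localization of $\mathbb{Z}$ at a prime $p$ (local, so $J(R)=pR\neq 0$), and $S=\{1,p,p^{2},\dots\}$, a multiplicatively closed set of central regular elements; then $RS^{-1}=\mathbb{Q}$ and $J(RS^{-1})=0$, so the image of $J(R)$ is not contained in $J(RS^{-1})$. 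Concretely, for $z\in J(R)$ and $v\in S$, membership of $z$ in $J(RS^{-1})$ would require $v-z$ to be left invertible in $RS^{-1}$, and with $v=z=p$ it is zero.

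Worse, no cleverer choice of witness can rescue the argument, because the theorem itself is false as stated. Let $R=M_{3}(\mathbb{Z}_{(p)})$ and $S=\{p^{k}I_{3}: k\geq 0\}$, a multiplicatively closed set of central regular elements. Then $R$ is right (and left) J-McCoy for trivial reasons: $J(R)=M_{3}(p\mathbb{Z}_{(p)})$, so $r=pI_{3}\neq 0$ satisfies $Ar\in J(R)$ for every $A\in R$, which witnesses the definition for any vanishing product of nonzero polynomials. But $RS^{-1}\cong M_{3}(\mathbb{Q})$ is not right J-McCoy: $J(M_{3}(\mathbb{Q}))=0$, and for the paper's own pair of polynomials used to treat $M_{3}(\mathbb{Z})$ (whose product is zero and whose left factor has the nine matrix units $E_{ij}$ as its coefficients), a witness $c$ would have to satisfy $E_{ij}c=0$ for all $i,j$, forcing $c=0$. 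For comparison, the paper's proof follows exactly your route and stumbles at exactly the step you isolated: it concludes $a_ic_i^{-1}rcw\in J(RS^{-1})$ after testing left invertibility of $1-(tw^{-1})(a_ic_i^{-1}rcw)$ only against elements $tw^{-1}$ whose denominator $w$ matches the $w$ occurring inside the candidate element, which is not the membership criterion for the Jacobson radical. So your instinct about where the difficulty sits was exactly right; the correct conclusion is that it cannot be overcome without an additional hypothesis forcing $J(R)S^{-1}\subseteq J(RS^{-1})$.
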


\begin{proof}
Suppose that $R$ is right J-McCoy. 
Let $f(x)=\sum_{i=0}^n a_{i}c_{i}^{-1}x^{i}$,  
$g(x)=\sum_{j=0}^m b_{j}d_{j}^{-1}x^{j} $ 
be nonzero elements in $(RS^{-1})[x]$  
such that $f(x)g(x)=0$. 
Let $a_{i}c^{-1}_{i} = c^{-1}a'_{i}$ and 
$b_{j}d^{-1}_{j}=d^{-1}b'_{j}$ with 
$c , d$ regular elements in $R$. 
So $f'(x)g'(x)=0$ such that 
$f'(x)=\sum_{i=0}^n a'_{i}x^{i}$ and 
$g'(x)=\sum_{j=0}^m b'_{j}x^{j} \in R[x]\setminus\{0\}$. 
Since $R$ is right J-McCoy, 
there exists  $r\in R\setminus\{0\}$ 
such that $a'_{i}r \in J(R)$ for each $i$, 
equivalently we have $1-ta'_{i}r$ 
is left invertible in $R$ for each $t\in R$. 
So 
$c^{-1}w^{-1}(1-tw^{-1}a_ic_i^{-1}rcw) =
c^{-1}w^{-1}-tw^{-1}a_ic_i^{-1}r$ 
is left invertible in $RS^{-1}$,
for each $tw^{-1} \in RS^{-1}$ and so 
$a_ic_i^{-1}rcw \in J(RS^{-1})$.  
Thus $RS^{-1}$ is  right J-McCoy.  
\end{proof}

\begin{cor}\label{Corollary}
For a ring $R$ , let $R[x]$ be a right (resp. left) J-McCoy ring. 
Then $R[x,x^{-1}]$ is a right (resp. left) J-McCoy ring.
\end{cor}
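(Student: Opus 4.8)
The plan is to realize the Laurent polynomial ring $R[x,x^{-1}]$ as a localization of $R[x]$ at the powers of $x$, and then to invoke Theorem \ref{theorem 0}. First I would set $S=\{1,x,x^{2},\ldots\}\subseteq R[x]$ and observe that $S$ is multiplicatively closed, since $x^{i}\cdot x^{j}=x^{i+j}$. The substantive point to verify is that every $x^{n}$ is a \emph{central regular} element of the ring $R[x]$. Centrality holds because the indeterminate $x$ commutes with every element of $R[x]$ by the very definition of the polynomial ring, so $x^{n}$ is central. Regularity holds because multiplying a polynomial by $x$ simply shifts its coefficients, whence $x\,f(x)=0$ (or $f(x)\,x=0$) forces $f(x)=0$; iterating, each $x^{n}$ is regular. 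Thus $S$ is a multiplicatively closed set of central regular elements of $R[x]$, exactly the hypothesis needed to apply Theorem \ref{theorem 0} with $R[x]$ in the role of the base ring.

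Next I would identify the localization explicitly. Formally inverting the powers of $x$ in $R[x]$ adjoins $x^{-1}$ and nothing more, so that $R[x]S^{-1}=R[x,x^{-1}]$, the ring of Laurent polynomials. With this identification in place, the corollary is immediate: by hypothesis $R[x]$ is right (resp. left) J-McCoy, so Theorem \ref{theorem 0}, applied to the ring $R[x]$ and the set $S$ above, yields that $R[x]S^{-1}=R[x,x^{-1}]$ is right (resp. left) J-McCoy, which is precisely the assertion.

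I do not expect a genuine obstacle here, as the entire analytic content is supplied by Theorem \ref{theorem 0}; the corollary is essentially a specialization. The only items requiring (routine) checking are that the powers of $x$ form a central regular multiplicatively closed subset of $R[x]$ and that their localization is the Laurent ring, both of which are standard. If one wished to be completely self-contained, the single thing worth spelling out is the centrality and regularity of $x$ in $R[x]$, since Theorem \ref{theorem 0} relies on those properties to carry the left-invertibility computation through to the localized ring.
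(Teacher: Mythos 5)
Your proposal is correct and is essentially the paper's own argument: the paper also takes $\Delta=\{1,x,x^{2},\ldots\}$, notes it is a multiplicatively closed subset of $R[x]$, identifies $R[x,x^{-1}]$ with the localization, and invokes Theorem \ref{theorem 0}. Your version is in fact slightly more careful, since you explicitly verify that the powers of $x$ are central and regular in $R[x]$ (which is the hypothesis Theorem \ref{theorem 0} actually requires), a step the paper leaves implicit.
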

\begin{proof} Let $\Delta =\{1,x,x^{2},...\}$. 
Then clearly $\Delta$ is multiplicatively closed subset of $R[x]$. 
Since $R[x,x^{-1}]=\Delta^{-1}R$, 
it follows that $R[x,x^{-1}]$ is right J-McCoy.
\end{proof}

  \singlespacing
\small

\end{document}